\newtheorem{thm}{Theorem}[section]
\newtheorem{cor}[thm]{Corollary}
\newtheorem{lem}[thm]{Lemma}
\theoremstyle{remark}
\numberwithin{equation}{section}
\def\hb{\hfill$\Box$}
\def\lb{\langle}
\def\rb{\rangle}
\def\og{\omega}
\newcommand{\thmref}[1]{Theorem~\ref{#1}}
\def\TT{\mathbb{T}}
\def\sph{\mathbb{S}^{d-1}}
\def\f{\frac}
\def\Bl{\Bigl} \def\Br{\Bigr}
 \def\la{{\langle}}
 \def\ra{{\rangle}}
 \def\bl{\bigl}
\def\br{\bigr}
\def\({\left(}
\def \){ \right)}
\def\sa{\sigma}
 \def\a{{\alpha}}
 \def\k{{\kappa}}
 \def\l{{\lambda}}
 \def\s{{\sigma}}
 \def\D{{\Delta}}
 \def\va{\varepsilon}
 \def\CD{{\mathcal D}}
 \def\CH{{\mathcal H}}
 \def\CR{{\mathcal R}}
 \def\BB{{\mathbb B}}
 \def\RR{{\mathbb R}}
  \def\SS{{\mathbb S}}
 \def\ZZ{{\mathbb Z}}
 \def\proj{\operatorname{proj}}
\def\leqs{\leqslant}
\newcommand{\wt}{\widetilde}
\newcommand{\p}{\partial}
\newcommand{\xa}{\la x,\a\ra}
\newcommand{\ww}{h_{\k}^2(x)}
\def\sph{\mathbb{S}^{d-1}}
\def\sphd{\mathbb{S}^{d}}
\def\Edd{\RR^d}
\def\leqs{\leqslant}
\def\ball{\mathbb{B}^d}
 \def\proj{\operatorname{proj}}
\def\HH{\mathcal{H}}
\begin{document}

\title[]{ Uncertainty Principles on weighted spheres, balls and simplexes}

\author{Han Feng}
\address{Department of Mathematical and Statistical Sciences\\
University of Alberta\\ Edmonton, Alberta T6G 2G1, Canada.}
\email{hfeng3@ualberta.ca}

\thanks{The  author  was partially supported  by the NSERC Canada
under grant RGPIN 311678-2010.
}
\maketitle

\begin{abstract}
 This paper studies the uncertainty principle for  spherical $h$-harmonic expansions on the unit sphere of $\RR^d$ associated with a weight function   invariant under a general finite reflection group, which
 is in full analogy with the classical Heisenberg inequality.  Our proof is motivated by a new decomposition of the Dunkl-Laplace-Beltrami operator on the weighted sphere.
\end{abstract}
%\begin{keywords}
%{\bf Keywords:} spherical harmonics; uncertainty principle; Dunkl theory
%\end{keywords}% \subjclass{42B10, 42C10}
\section{Introduction}
The uncertainty principle is a fundamental result in quantum mechanics, and it can be formulated in the Euclidean space $\Edd$, in the form of the classical Heisenberg inequality, as
\begin{equation}\label{Heisenberg}
  \inf_{a\in \Edd} \int_{\Edd}\|x-a\|^2 |f(x)|^2 dx\int_{\Edd} |\nabla f(x)|^2 dx\geq \f{d^2} 4\left(\int_{\Edd} |f(x)|^2\right)^2,
\end{equation}
where $\nabla$ is the gradient operator.
There are many papers devoted to the study of this inequality and its various generalizations, see, for instance, \cite{Folland},\cite{Rosler},\cite{DX2014}.

In particular, on the unit sphere, F.Dai and Y.Xu \cite{DX2014} established the analogue result, which states that:  if $f:\sph\to\RR$ satisfying $\int_{\sph} f(x)\, d\s(x)=0$ and $\int_{\sph}|f(x)|^2 d\s(x)=1$, then
\begin{equation}\label{unweight}
  \Bl(\min_{y\in \sph} \int_{\sph} (1-\la x, y\ra) |f(x)|^2 \, d\s(x) \Br)
\bl(\int_{\sph}|\nabla_{0} f|^2d\s(x)\br) \ge C_{ d}>0.
\end{equation}

In a recent paper \cite{X_U} ,  with a weight function $h^2_\k(x)$ invariant under a group $G$, he studied  the uncertainty principle on the unit sphere $\sph$. By introducing a weighted analogue $\nabla_{\k,0}$ of the tangential gradient $\nabla_0$,
%$$\nabla_{\k,0} f(\xi)=\nabla_0 f(\xi) +\sum_{v\in \CR_+} \k_v E_v f(\xi) v,\    \   \xi\in \sph.$$
  he  proved \cite[Theorem 4.1]{X_U}  that
if $f:\sph\to\RR$ is invariant under the group $G$ and satisfies that $\int_{\sph} f(x) h_\k^2(x)\, d\s(x)=0$ and $\int_{\sph}|f(x)|^2 h_\k^2(x)d\s(x)=1$, then
\begin{equation}\label{9-1-1}
\Bl(\min_{1\leq i\leq d} \int_{\sph} (1-\la x, e_i\ra) |f(x)|^2 h_\k^2(x)\, d\s(x) \Br)
\bl(\int_{\sph}|\nabla_{\k,0} f|^2 h^2_\k(x)d\s(x)\br)\ge C_{\k, d}>0.\end{equation}
where $e_i$, $i=1,\cdots,d$, is the standard vector,namely only the $i$th coordinate is nonzero 1,  and $C_{\k,d}$ is a constant only depends on parameter $\k,d$, and $\la \cdot,\cdot\ra$ is the inner product in $\Edd$.

   The purpose of the present paper is
   to show that  the inequality \eqref{9-1-1} with minimum being taken over all $y\in \sph$ rather than the finite subset $\{e_1, \cdots, e_d\}$ remains true without the extra assumption that $f$ is $G$-invariant.

Recall that the geodesic distance on the sphere is defined by $d(x,y)=\arccos\la x,y\ra$, so that
\[1-\la x,y\ra=2\sin^2 \f{d(x,y)}2\sim d(x,y)^2\]
with $A\sim B$ meaning $\f 1 c A\leq  B\leq c A$ for some $c>0$. It implies that \eqref{unweight},\eqref{9-1-1} can be regarded as a close analogy of \eqref{Heisenberg}.

Let $G\subset O(d)$ be a finite reflection group on $\RR^d$.
For   $v\in \RR^d\setminus\{0\}$, we denote by $\sa_v$ the
reflection with respect to the hyperplane perpendicular to $v$; that is,
 $$\sa_v x= x- \f{2\la x, v\ra}{\|
v\|^2}v,\   \   \    x\in\RR^d,$$ where  $\la\cdot,
\cdot\ra$  denotes  the  Euclidean inner product on $\RR^d$ and
 $\|x\|:=\sqrt{\la x, x\ra}$.
Let $\CR$ be the  root system of $G$, normalized so that
$\la v, v\ra =2$ for all $v\in \CR$, and fix a positive subsystem $\CR_+$ of $\CR$, such that $\CR=\CR_+\cup (-\CR_+)$. From the general theory of reflection groups (see, e.g.,\cite{DX} ), the set of reflections in $G$  associates with  $\{\s_v:\  \ v\in \CR_+\}$, which also generates the group $G$.
 Let  $\k:\ \CR\to [0,\infty)$, $v\mapsto \k_v=\k(v)$ be a nonnegative  multiplicative function on $\CR$; that is, $\k$ is a nonnegative $G$-invariant function on $\CR$. Let $h_\k$ denote the weight function on $\RR^d$ defined by
\begin{equation}\label{1-1-t}h_\kappa(x):=\prod_{v\in \CR_+} |\la x, v\ra
|^{\k_v},\    \   \    x\in \RR^{d}. \end{equation}
It is $G$-invariant and homogeneous of degree $|\k|:=\sum_{v\in \CR_+}\k_v$.

Let $\Delta_{\k,0}$ be  the weighted analogy of the Laplace-Beltrami operator $\D_0$ on $\sph$, whose precise definition will be given  in next section.
Then our main result can be stated as follows:

\begin{thm}\label{thm-9-1}
Let $f\in C^1(\sph)$ be such that $\int_{\sph} f(x) h_\k^2(x)\, d\s(x) =0$ and $\int_{\sph} |f(x)|^2 h_\k^2(x)\, d\s(x) =1$.
Then
\begin{align}
\Bl[&\min_{y\in\sph} \int_{\sph}(1-\la x, y\ra ) |f(x)|^2
h_\k^2(x)\, d\s(x) \Br]\times\notag\\
&\times  \Bl[ \int_{\sph} | \sqrt{-\Delta_{\k,0}}
f(x)|^2 h_\k^2(x)\, d\s(x)\Br]\ge C_{d,\k}>0, \label{9-1:uncertainty}\end{align}
where $C_{d,\k}$ is a constant depending on $d$ and $\k$ only.
\end{thm}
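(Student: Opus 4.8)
\section*{Proof proposal}

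The plan is to reduce the sphere estimate to a spectral inequality for the Dunkl--Laplace--Beltrami operator $\Delta_{\k,0}$, exactly as in the unweighted case of Dai--Xu, but using the new decomposition of $\Delta_{\k,0}$ advertised in the abstract to handle the genuinely $d$-dimensional (non-$G$-invariant) setting. Write $F(y):=\int_{\sph}(1-\la x,y\ra)|f(x)|^2 h_\k^2\,d\s$ for the ``localization'' factor. Since $\int |f|^2 h_\k^2\,d\s=1$, we have $F(y)=1-\la P,y\ra$ where $P:=\int_{\sph} x\,|f(x)|^2 h_\k^2(x)\,d\s(x)\in\RR^d$ is the weighted center of mass; hence $\min_{y\in\sph}F(y)=1-\|P\|$. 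The inequality to be proved is therefore equivalent to
\begin{equation*}
\bigl(1-\|P\|\bigr)\,\int_{\sph}\bigl|\sqrt{-\Delta_{\k,0}}\,f\bigr|^2 h_\k^2\,d\s\ \ge\ C_{d,\k}.
\end{equation*}
There are two regimes. If $\|P\|\le \tfrac12$ (say), then $1-\|P\|\ge\tfrac12$, and it suffices to bound $\int|\sqrt{-\Delta_{\k,0}}f|^2 h_\k^2\,d\s$ from below by a constant; this follows from the spectral gap of $-\Delta_{\k,0}$ on $\{g:\int g\,h_\k^2\,d\s=0\}$, since the zero eigenspace is the constants and the first nonzero eigenvalue is $\lambda_1=d+2|\k|-1+\,\text{(something positive)}>0$, giving $\int|\sqrt{-\Delta_{\k,0}}f|^2 h_\k^2\,d\s\ge \lambda_1\|f\|_2^2=\lambda_1$. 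So the whole content is the regime $\|P\|$ close to $1$, where the mass of $|f|^2 h_\k^2$ concentrates near a single point $\omega:=P/\|P\|\in\sph$, and one must show that such concentration forces a large Dirichlet energy $\int|\sqrt{-\Delta_{\k,0}}f|^2 h_\k^2\,d\s\gtrsim (1-\|P\|)^{-1}$.

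To get that quantitative lower bound I would exploit the decomposition of $\Delta_{\k,0}$ into ``one-dimensional'' pieces. Concretely, for a unit vector $e$ one expects a formula of the type $-\Delta_{\k,0}=\sum_{v}(\text{angular part in the }v\text{-direction})$, or more usefully the identity controlling $\int (1-\la x,\omega\ra)|f|^2 h_\k^2\,d\s$ against the energy of $f$ restricted to great circles through $\pm\omega$; this is the mechanism by which \eqref{unweight} was proved, where one writes $1-\la x,e\ra$ using the coordinate $x_e=\la x,e\ra$ and integrates by parts in the polar angle around $e$, picking up $\partial_{x_e}$-type derivatives that are dominated by $\nabla_{\k,0}f$ and hence by $\sqrt{-\Delta_{\k,0}}f$ via the relation $\int|\nabla_{\k,0}f|^2 h_\k^2\,d\s=\int|\sqrt{-\Delta_{\k,0}}f|^2 h_\k^2\,d\s$. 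The key point making the weighted, non-invariant case work is that although $h_\k^2$ is not rotation invariant, the decomposition lets us run this one-dimensional argument along a distinguished direction (or a finite family of directions spanning $\RR^d$) while the weight only contributes lower-order, controllable factors; the reflection symmetry of $h_\k^2$ ensures no boundary terms are lost. A clean way to package this: prove an auxiliary ``Poincaré-type'' inequality
\begin{equation*}
\int_{\sph}(1-\la x,\omega\ra)\,|f(x)-c|^2 h_\k^2\,d\s\ \le\ C_{d,\k}\int_{\sph}\bigl|\sqrt{-\Delta_{\k,0}}f\bigr|^2 h_\k^2\,d\s\cdot\Bigl(\sup_{x}(1-\la x,\omega\ra)\Bigr)
\end{equation*}
for a suitable constant $c$, no — rather, the sharper statement that for any $g$ with mean zero, $\|g\|_2^2\le C\,\int(1-\la x,\omega\ra)^{-1}|\,g|^2\cdots$; I will instead follow the Dai--Xu route of comparing $\int(1-\la x,\omega\ra)|f|^2 h_\k^2\,d\s$ with $\|f-\bar f\|_2^2$ where $\bar f$ is the weighted mean, deduce $1-\|P\|\gtrsim$ a spectral quantity, and combine.

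Finally I would assemble the two regimes. In the concentration regime, suppose for contradiction that $(1-\|P\|)\,\mathcal E(f)<\epsilon$ with $\mathcal E(f):=\int|\sqrt{-\Delta_{\k,0}}f|^2 h_\k^2\,d\s$. From $\mathcal E(f)\ge\lambda_1$ we get $1-\|P\|<\epsilon/\lambda_1$, i.e.\ strong concentration near $\omega$; then the one-dimensional-decomposition estimate gives $\mathcal E(f)\gtrsim c_{d,\k}/(1-\|P\|)$, whence $(1-\|P\|)\,\mathcal E(f)\gtrsim c_{d,\k}$, contradicting smallness of $\epsilon$ once $\epsilon<c_{d,\k}$. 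Taking $C_{d,\k}$ to be the minimum of $\tfrac12\lambda_1$ and the constant $c_{d,\k}$ coming from the decomposition yields \eqref{9-1:uncertainty}. The main obstacle I anticipate is making the ``one-dimensional reduction'' of $\Delta_{\k,0}$ precise enough in the presence of the non-invariant weight $h_\k^2$: one needs the integration-by-parts along the $\omega$-meridians to produce exactly the tangential Dunkl-gradient components (no uncontrolled difference-operator terms from the reflections, since those act along the orthogonal directions), and one must verify that the Jacobian factors from writing the surface measure in $(\la x,\omega\ra,\text{angular})$ coordinates together with $h_\k^2$ stay comparable to constants on the region where the mass lives. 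If the decomposition in the body of the paper is stated for a general direction $y\in\sph$ (not just the coordinate axes $e_i$), this step is essentially a change of variables; if it is only available for the $e_i$, one first rotates, at the cost of replacing $h_\k$ by $h_\k\circ(\text{rotation})$, and checks the argument is rotation-robust.
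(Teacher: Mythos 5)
Your overall architecture --- rewrite the minimum as $1-\|P\|$ with $P$ the weighted center of mass, dispose of the regime where $\|P\|$ is bounded away from $1$ by the spectral gap ($\|\sqrt{-\Delta_{\k,0}}f\|_{\k,2}\ge\|f\|_{\k,2}=1$ for mean-zero $f$), and in the concentration regime integrate by parts against $1-\la x,y\ra$ --- is essentially the paper's. But the two places where you defer the work are precisely where the proof lives, and one of the tools you invoke is false. The identity $\int_{\sph}|\nabla_{\k,0}f|^2h_\k^2\,d\s=\int_{\sph}|\sqrt{-\Delta_{\k,0}}f|^2h_\k^2\,d\s$ does not hold for $\k\ne0$: the two energies differ by the nonnegative quantity $\f{2\l_\k}{\o_d^\k}\sum_{v\in\CR_+}\k_v\int_{\sph}\bl(f(\xi)-f(\s_v\xi)\br)f(\xi)h_\k^2(\xi)\,d\s(\xi)$ (this is exactly how the paper deduces Corollary 1.2 from Theorem 1.1), so one only has $\|\sqrt{-\Delta_{\k,0}}f\|_{\k,2}\le\|\nabla_{\k,0}f\|_{\k,2}$ --- the wrong direction for your purposes, since you need a \emph{lower} bound on $\|\sqrt{-\Delta_{\k,0}}f\|_{\k,2}$ and ``dominated by $\nabla_{\k,0}f$'' does not imply ``dominated by $\sqrt{-\Delta_{\k,0}}f$''. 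The correct substitute is the exact decomposition of Theorem 2.1, $\|\sqrt{-\Delta_{\k,0}}f\|_{\k,2}^2=\sum_{1\le i<j\le d}\|D_{i,j}f\|_{\k,2}^2+\sum_{v\in\CR_+}\k_v\|E_vf\|_{\k,2}^2$, whose whole point is that the operators $D_{i,j}$ and $E_v$ are exactly the ones the integration by parts produces.

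Second, your hope that the reflection terms do not enter (``no uncontrolled difference-operator terms from the reflections'') is misplaced: differentiating $h_\k^2$ along the meridians produces the singular integrals $\la y,\a\ra\int_{\sph}|f(x)|^2h_\k^2(x)\la x,\a\ra^{-1}\,d\s(x)$, one for each positive root with $\k_\a>0$ (Lemma 4.1 of the paper), and controlling these is the main new difficulty over the unweighted case \eqref{unweight}. The paper handles them by splitting the sphere according to whether $|\la x,\a\ra|>(1-\va)|\la y,\a\ra|$ (where the integrand is simply bounded by $(1-\va)^{-1}|f|^2h_\k^2$) or not (where one rewrites $|f(x)|^2/\la x,\a\ra$ as $(E_\a f)(x)f(x)$ using the reflection symmetry of $h_\k^2$, and uses $\va|\la y,\a\ra|\le\|x-y\|\le\sqrt2\,(1-\la x,y\ra)^{1/2}$ to extract the localization factor); the surviving term $(1-\va)^{-1}|\k|$ must then be beaten by the coefficient $|\k|+\f{d-1}2$ on the other side of the identity, which is why an explicit small $\va$ enters the final constant. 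Likewise, extracting $(1-\la x,y\ra)^{1/2}$ from the $D_{i,j}$ term requires the observation $\sum_{i,j}x_ix_jD_{i,j}=0$, which lets one replace $y_i$ by $y_i-x_i$ before applying Cauchy--Schwarz. None of this is in your outline, and the intermediate claim $\mathcal{E}(f)\gtrsim(1-\|P\|)^{-1}$ is neither what the argument yields nor what is needed; as it stands the proposal is a plausible plan with the decisive estimates missing and an incorrect identity at its hinge.
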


As a direct corollary , we obtain the following improvement of Theorem 4.1 and Theorem 4.2 of  \cite{X_U}:
\begin{cor}\label{cor-9-2}If $f\in C^1(\sph)$  satisfies that $\int_{\sph} f(x) h_\k^2(x)\, d\s(x)=0$ and $\int_{\sph} |f(x)|^2 h_\k^2(x)\, d\s(x) =1$, then
\begin{equation}
\Bl(\min_{y\in \sph} \int_{\sph} (1-\la x, y\ra) |f(x)|^2 h_\k^2(x)\, d\s(x) \Br)
\bl(\int_{\sph}|\nabla_{\k,0} f|^2 h^2_\k d\s(x)\br) \ge C_{\k, d}>0.\end{equation}
\end{cor}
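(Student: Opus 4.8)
The plan is to evaluate the minimum over $y$ exactly, reduce the assertion to a clean Heisenberg-type inequality, and prove the latter by an integration-by-parts/Cauchy--Schwarz argument built on the decomposition of $\Delta_{\k,0}$ presented in the next section. First, for every $y\in\sph$,
\[
\int_{\sph}(1-\langle x,y\rangle)|f(x)|^2 h_\k^2(x)\,d\s(x)=1-\langle y,v\rangle,\qquad
v:=\int_{\sph}x\,|f(x)|^2 h_\k^2(x)\,d\s(x)\in\Edd ,
\]
and since $\|v\|\le\int_{\sph}\|x\|\,|f|^2 h_\k^2\,d\s=1$, one gets $\min_{y\in\sph}\int_{\sph}(1-\langle x,y\rangle)|f|^2 h_\k^2\,d\s=1-\|v\|$, attained at $y=v/\|v\|$ when $v\neq 0$. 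Writing $E:=\int_{\sph}|\sqrt{-\Delta_{\k,0}}f|^2 h_\k^2\,d\s$, the theorem becomes the bound $(1-\|v\|)\,E\ge C_{d,\k}$. This is genuinely a Heisenberg-type inequality: $E$ is a squared ``momentum'', while $1-\|v\|$ is comparable to the variance of the position observable, because
\[
\sum_{i=1}^{d}\int_{\sph}(x_i-v_i)^2|f|^2 h_\k^2\,d\s=\int_{\sph}\|x-v\|^2|f|^2 h_\k^2\,d\s=1-\|v\|^2
\qquad\text{and}\qquad \tfrac12(1-\|v\|^2)\le 1-\|v\|\le 1-\|v\|^2 .
\]

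The regime in which $f$ is not concentrated is immediate: since $\int_{\sph}f h_\k^2\,d\s=0$, the $h$-harmonic expansion of $f$ has no term of degree $0$, so the spectral gap gives $E\ge\mu_1=2|\k|+d-1>0$; hence if $\|v\|\le\tfrac12$ then $(1-\|v\|)E\ge\mu_1/2$. From now on I assume $\tfrac12\le\|v\|\le1$.

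For this main case I use the decomposition of the Dunkl--Laplace--Beltrami operator: there are first-order differential--difference operators $\mathcal{D}_{0,1},\dots,\mathcal{D}_{0,d}$ on $\sph$ (the weighted tangential Dunkl derivatives in the coordinate directions, reducing to $\partial_i-x_i\sum_k x_k\partial_k$ when $\k\equiv 0$) with $-\Delta_{\k,0}=\sum_{i=1}^{d}\mathcal{D}_{0,i}^{*}\mathcal{D}_{0,i}$ on $L^2(\sph,h_\k^2\,d\s)$, so that $E=\sum_{i=1}^{d}\int_{\sph}|\mathcal{D}_{0,i}f|^2 h_\k^2\,d\s$. Two structural consequences of the decomposition are then needed: $\sum_{i=1}^{d}x_i\mathcal{D}_{0,i}=0$ on $\sph$ (the tangential radial derivative vanishes, which is why one must centre at $v$), and the weighted divergence identity
\[
\beta_{d,\k}\,\|v\|^2 \;=\; -2\,\mathrm{Re}\sum_{i=1}^{d}\int_{\sph}(x_i-v_i)\,f\,\overline{\mathcal{D}_{0,i}f}\;h_\k^2\,d\s ,
\]
holding with a constant $\beta_{d,\k}>0$; when $\k\equiv 0$ this is $\beta=d-1$, because $-2\,\mathrm{Re}\sum_i\int_{\sph}(x_i-v_i)f\,\overline{\nabla_{0,i}f}\,d\s=\int_{\sph}\langle v,\nabla_0\rangle(|f|^2)\,d\s=-\int_{\sph}|f|^2\,\Delta_0\langle x,v\rangle\,d\s=(d-1)\|v\|^2$, using $\sum_i x_i\nabla_{0,i}=0$ and that $\langle x,v\rangle$ restricts to a first-degree spherical harmonic. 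Granting these, Cauchy--Schwarz in $L^2(\sph,h_\k^2\,d\s)$ followed by Cauchy--Schwarz in $\Edd$ gives
\[
\beta_{d,\k}\|v\|^2\le 2\Bigl(\sum_{i}\int_{\sph}(x_i-v_i)^2|f|^2 h_\k^2\,d\s\Bigr)^{1/2}\Bigl(\sum_{i}\int_{\sph}|\mathcal{D}_{0,i}f|^2 h_\k^2\,d\s\Bigr)^{1/2}=2(1-\|v\|^2)^{1/2}E^{1/2},
\]
so that, squaring and using $\|v\|\ge\tfrac12$, $(1-\|v\|^2)E\ge\beta_{d,\k}^2/64$ and therefore $(1-\|v\|)E\ge\tfrac12(1-\|v\|^2)E\ge\beta_{d,\k}^2/128$. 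Taking $C_{d,\k}=\min\{\mu_1/2,\ \beta_{d,\k}^2/128\}$ then yields \eqref{9-1:uncertainty}.

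The single nonroutine ingredient is the decomposition $-\Delta_{\k,0}=\sum_i\mathcal{D}_{0,i}^{*}\mathcal{D}_{0,i}$ together with the two structural facts attached to it; I expect the main obstacle to be carrying the integration by parts in the divergence identity through the reflection parts of the Dunkl operators and checking that the resulting multiplier stays bounded below by a positive constant with no symmetry hypothesis on $f$ — this is precisely the ``new decomposition of the Dunkl--Laplace--Beltrami operator'' mentioned in the abstract, and it is what makes it possible to drop the $G$-invariance assumption of \cite{X_U}. Once it is in place, the uncertainty principle follows from the short argument above, and Corollary~\ref{cor-9-2} is immediate from Theorem~\ref{thm-9-1} and the comparison $\int_{\sph}|\nabla_{\k,0}f|^2 h_\k^2\,d\s\ge c_{d,\k}\,E$ of \cite{X_U}.
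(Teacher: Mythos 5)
Your treatment of the Corollary itself --- deduce it from Theorem \ref{thm-9-1} together with a comparison between $\|\nabla_{\k,0}f\|_{\k,2}$ and $\|\sqrt{-\Delta_{\k,0}}f\|_{\k,2}$ --- is exactly the paper's route. But the comparison is not simply quotable from \cite{X_U}: the paper has to derive $\|\sqrt{-\Delta_{\k,0}}f\|_{\k,2}\le\|\nabla_{\k,0}f\|_{\k,2}$ from Xu's identities (3.15), (3.13), (3.3), after first observing that two terms in (3.15) cancel because $(I-\s_v)^2=2(I-\s_v)$, and then checking that the remaining cross term $\sum_{v\in\CR_+}\k_v\int_{\sph}(f(\xi)-f(\s_v\xi))f(\xi)h_\k^2(\xi)\,d\s(\xi)$ is nonnegative by Cauchy--Schwarz. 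You should supply that short argument rather than attribute the inequality wholesale to \cite{X_U}.

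The genuine gap is in the main body of your proposal, the proof of Theorem \ref{thm-9-1} on which everything rests. Two of your ``structural facts'' fail without $G$-invariance of $f$. First, $\sum_i x_i\mathcal{D}_{0,i}f$ does not vanish on $\sph$: the differential part is the radial derivative of a degree-$0$ homogeneous function and dies, but the reflection parts contribute $\sum_{v\in\CR_+}\k_v\,(f(x)-f(\s_v x))$, which is nonzero for general $f$ (this is precisely identity (3.3) of \cite{X_U}, used in Section 3 of the paper). Second, and more seriously, the ``weighted divergence identity'' $\beta_{d,\k}\|v\|^2=-2\,\mathrm{Re}\sum_i\int(x_i-v_i)f\,\overline{\mathcal{D}_{0,i}f}\,h_\k^2\,d\s$ cannot hold with a clean constant: the Leibniz rule fails for Dunkl operators, and the correct analogue --- the paper's Lemma, identity \eqref{9-4-0} --- produces the extra terms $\sum_{\a\in\CR_+}\k_\a\la y,\a\ra\int_{\sph}|f(x)|^2h_\k^2(x)\la x,\a\ra^{-1}\,d\s(x)$, which are not a multiple of $\|v\|^2$ and are singular on the reflecting hyperplanes. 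Controlling them is where all the work is: the paper splits each such integral according to whether $|\la x,\a\ra|$ exceeds $(1-\va)|\la y,\a\ra|$ or not, bounds one piece by $1/(1-\va)$ and the other by $\va^{-1}\|E_\a f\|_{\k,2}\bl(\int_{\sph}|f|^2(1-\la x,y\ra)h_\k^2\,d\s\br)^{1/2}$ using the difference operators $E_\a$ from the decomposition \eqref{1-2-2}. Your argument hides exactly this step inside an unproven identity, so as written it does not establish the theorem --- and hence not the corollary --- in the non-invariant case that is the whole point of the paper.
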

Noteworthy, the improvement by taking minimum over all $y\in \sph$ instead of $\{e_1, \cdots, e_d\}$ is nontrivial since the weight $h^2_\k$ is not invariant under all rotations. And obviously, the requirement of the $G$-invariance of $f$ turns out to be not necessary.

Finally, we shall also  establish similar results
  for the weighted orthogonal polynomial
expansions (WOPEs)
 with respect
to the weight function
\begin{equation} \label{weightB}
 W_\k^B(x): = \Bl(\prod_{v\in \CR_+} |\la x, v\ra|^{2\k_v}\Br)(1-\|x\|^2)^{\mu-1/2},\   \
 \qquad \mu \ge 0
\end{equation}
on the unit ball $\BB^d := \{x\in\RR^d: \|x\| \le 1\}$ where $\CR_+$, $\k$  are adopted as before, as well as
 for the WOPEs  with respect to  the weight
 function
\begin{equation}  \label{weightT1}
 W_\k^T(x; \ZZ^d_2) := \Bl(\prod_{i=1}^{d} x_i ^{\k_i-1/2}\Br)(1-|x|)^{\k_{d+1}-1/2},
      \qquad \min_{1\leq i\leq d+1}\k_i \ge 0,
\end{equation}
or
 \begin{equation} \label{weightT2}
   W_{\k,\mu}^T(x;H_d)=\prod_{i=1}^d x_i^{\k'-1/2}\prod_{1\leqs i<j\leqs d}|x_i-x_j|^\k(1-|x|)^{\mu-1/2}, \qquad \min\{\k',\k,\mu\} \ge 0,
 \end{equation}
on the simplex
 $\TT^d :=\{x\in\RR^d:x_j\ge 0, \ldots, x_d\ge 0, 1-|x| \ge 0\}$, here, and in what follows,
$|x|:=\sum_{j=1}^d |x_j|$ for $x=(x_1,\cdots, x_d)\in\RR^d$.

\section{Preliminaries}

\subsection{The Dunkl Theory}

This theory of spherical
$h$-harmonics was  initially developed by C.F. Dunkl in \cite{Dunkl1, D1, D2}.  For the details, one can refer to for instance \cite{DX} and \cite{WHP}.
Let $\CR$ be a fixed root system in $\RR^d$ normalized so that $\la v, v\ra =2$ for all $v\in \CR$,  and $G$ the associated reflection group.
Let $\k: \CR\to [0,\infty)$ be  a multiplicity function on $\CR$.

 The Dunkl operators associated with
$G$ and $\k$ are defined  by
\begin{equation*}\label{2-1-eq}\mathcal{D}_{ i} f (x) =\p_i f(x) +
\sum_{v\in \CR_+} \k_v  \la v, e_i\ra \f{f(x)-f(\s_v x)}{\la x,v\ra},\   \ i=1,\cdots, d, \ \  f\in C^1(\RR^d),\end{equation*} where
$\p_i=\f{\p}{\p x_i}$, $\CR_+$ is a fixed positive subsystem of $\CR$.
Here we use the notation   $g\circ f (x) :=f(gx)$ for $g\in G$, $f\in C(\sph)$ and $x\in\sph$.

The $\k$-Laplacian on $\RR^d$ is defined by
$\Delta_\k :=\sum_{j=1}^d \mathcal{D}_j^2.$
The operator $\Delta_\k$ is $G$-invariant; that is,  $g\circ \Delta_\k = \Delta_\k\circ g$ for all $g\in G$. Similarly, the $\k$-grandient is defined by $\nabla_\k=(\CD_1,\cdots,\CD_d)$. Furthermore, by restricting on the unit sphere,  the weighted analogue $\D_{\k,0}$ of Laplace-Beltrami operator $\D_{0}$ and analogue $\nabla_{\k,0}$ of the tangential gradient $\nabla_0$  are defined as follows:
  \begin{equation*}\label{2-16:sec2}
   \Delta_{\k,0}f(x):=\Delta_\k F(z)|_{z=x}, \qquad \forall\, x\in \sph
\end{equation*}
and
  \begin{equation*}\label{2-16:sec3}
   \nabla_{\k,0}f(x):=\nabla_\k F(z)|_{z=x}, \qquad \forall\, x\in \sph
\end{equation*}
where $F(z)=f(\f z{\|z\|})$.

\subsection{$h$-harmonic expansions}

Let $\sph =\{x\in\RR^d: \|x\| =1\}$ denote  the unit sphere of
$\RR^{d}$ equipped  with the usual Haar measure $d\sa(x)$, and the weight function  $h_\k$  given in \eqref{1-1-t}.  For $1<p<\infty$, recall that
$$
   \|f\|_{\k,p} :=
  \Big( \int_{\sph} |f(y)|^p h_\kappa^2(y) d\sa(y)
\Big)^{1/p}.
$$
We denote by $\Pi_n^d$ the space of all spherical polynomials of degree at most $n$ on $\sph$, and $\CH_n^{d}(h_\k^2)$
 the space of all spherical $h$-harmonics of degree $n$ on $\sph$. Thus, $\CH_n^{d}(h_\k^2)$ is the orthogonal complement of $\Pi_{n-1}^d$ in the space $\Pi_n^d$ with respect to the inner product
 $$\la f, g\ra_\k :=\int_{\sph} f(x) \overline{g(x)} h_\k^2(x)\, d\s(x),$$
 and  each function  $f\in L^2(h_\k^2; \sph)$ has
   a spherical $h$-harmonic   expansion
$f = \sum_{n=0}^\infty \proj_n(h_\k^2; f)$
converging
  in the norm of $L^2(h_\k^2; \sph)$.

  Here $\proj_n (h_\k^2): L^2(h_\k^2; \sph)\to \HH_n^d(h_\k^2)$ is the orthogonal projection. Also, the projection $\proj_n (h_\k^2;  f)$ can be extended
to all $f\in L^1(h_\k^2; \sph)$ in the sense that
\[\proj_n (h_\k^2;  f)(x)=\int_{\sph} f(y)P_n(x,y)h^2_\k(y)d\s(y), \   f\in L^1(h_\k^2; \sph),\]
with $P_n(h^2_\k; x,y)$ being the reproducing kernel of $\HH_n^d(h_\k^2)$.

  A crucial   point  in the theory of $h$-harmonics is that the space $\HH_n^d(h_\k^2)$   can also be seen   as an   eigenspace of a second order differential-difference operator $\Delta_{\k,0}$ corresponding to the eigenvalue $-n (n+2\l_\k)$ . Here and throughout the paper, \begin{equation*}\label{1-3-0}
   \l_\k: =\f {d-2} 2+|\k|.\end{equation*}

   Given $\a\in \RR$, we define the fractional power  $(-\Delta_{\k,0})^\a$ of $(-\Delta_{\k,0})$, in a distributional sense,  by
  \begin{equation*}\label{1-1-20}
   \proj_n(h_\k^2; (-\Delta_{\k,0})^\a f)   = (n(n+2\l_\k))^{\a}\proj_n(h_\k^2; f),\   \  n=0, 1,\cdots.
  \end{equation*}

 Next we introduce a first order differential operator on suitable functions defined on $\Edd$
\begin{equation*}
  D_{i,j}f(x)=x_j\p_i f(x)-x_i\p_jf(x), \  \  1\leq i,j\leq d
\end{equation*}
and
\begin{equation*}\label{1-11-E}
E_v f(x)= \f {f(x)-f(\s_v x)}{\la x,v \ra},\  \ v\in\RR^d\setminus\{0\}.
\end{equation*}
The proof of our main result relies on a decomposition of $(-\D_{\k,0})$ and an practical estimate of $\|(-\Delta_{\k,0})^{1/2} f \|_{\k,p}$ in \cite{DF}, which is stated as the following theorem.
\begin{thm}\cite{DF}\label{thm-1-5}
For $f\in C^1(\sph)$, with the notation given above,
  \begin{equation}\label{1-2-2}
  \| (-\Delta_{\k,0})^{1/2}f\|_{\k,2}^2 =\sum_{1\leq i<j\leq d}\| D_{i,j} f\|_{\k,2}^2+\sum_{v\in \CR_+}\k_v\| E_v f\|_{\k,2}^2 .
  \end{equation}
  \end{thm}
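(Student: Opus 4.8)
The plan is to first establish the explicit pointwise formula expressing the Dunkl--Laplace--Beltrami operator $\Delta_{\k,0}$ through the ordinary Laplace--Beltrami operator $\Delta_0$, the directional derivatives $\la\nabla_0 f,v\ra$ and the difference quotients $E_vf$, and then to deduce \eqref{1-2-2} by a single integration by parts over $\sph$ against the measure $h_\k^2\,d\s$, using that this measure is $G$-invariant.

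First I would record the spherical form of Dunkl's formula: for $f\in C^1(\sph)$ and $x\in\sph$,
\[
-\Delta_{\k,0}f(x)=-\Delta_0 f(x)-2\sum_{v\in\CR_+}\k_v\left(\frac{\la\nabla_0 f(x),v\ra}{\la x,v\ra}-\frac{f(x)-f(\s_v x)}{\la x,v\ra^2}\right).
\]
This comes straight from the definition $\Delta_{\k,0}f(x)=\Delta_\k F(x)$ with $F(z)=f(z/\|z\|)$, Dunkl's identity $\Delta_\k=\Delta+2\sum_{v\in\CR_+}\k_v\bigl(\frac{\la v,\nabla\cdot\ra}{\la x,v\ra}-\frac{1-\s_v}{\la x,v\ra^2}\bigr)$ on $\RR^d$, and the elementary relations $\Delta F(x)=\Delta_0 f(x)$, $\nabla F(x)=\nabla_0 f(x)$ and $E_vF(x)=E_vf(x)$ for $x\in\sph$ (valid because $F$ is homogeneous of degree $0$, so $\la z,\nabla F\ra=0$ and $F(\s_v z)=f(\s_v z/\|z\|)$); one may double-check the normalisation on the linear functions $x\mapsto\la x,a\ra$, which are $\Delta_\k$-harmonic, recovering $-\Delta_{\k,0}\la x,a\ra=(2\l_\k+1)\la x,a\ra$.

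Next I would pair with $f$ and integrate over $\sph$. Since $-\Delta_{\k,0}\ge 0$ is self-adjoint on $L^2(h_\k^2;\sph)$, the left side of \eqref{1-2-2} equals $\la-\Delta_{\k,0}f,f\ra_\k$. Integrating the $\Delta_0$-term by parts on the closed manifold $\sph$,
\[
-\int_{\sph}(\Delta_0 f)\,f\,h_\k^2\,d\s=\int_{\sph}\la\nabla_0 f,\nabla_0(f h_\k^2)\ra\,d\s=\int_{\sph}|\nabla_0 f|^2 h_\k^2\,d\s+\int_{\sph}f\,\la\nabla_0 f,\nabla_0 h_\k^2\ra\,d\s,
\]
and since $\nabla_0 h_\k^2=2h_\k^2\sum_{v\in\CR_+}\k_v\frac{v-\la x,v\ra x}{\la x,v\ra}$ we get, using $\la\nabla_0 f,x\ra=0$, that $\la\nabla_0 f,\nabla_0 h_\k^2\ra=2h_\k^2\sum_v\k_v\frac{\la\nabla_0 f,v\ra}{\la x,v\ra}$; thus this contribution cancels exactly against the gradient term $-2\sum_v\k_v\int\frac{\la\nabla_0 f,v\ra}{\la x,v\ra}f h_\k^2\,d\s$ coming from the pointwise formula, leaving
\[
\la-\Delta_{\k,0}f,f\ra_\k=\int_{\sph}|\nabla_0 f|^2 h_\k^2\,d\s+2\sum_{v\in\CR_+}\k_v\int_{\sph}\frac{\bigl(f(x)-f(\s_v x)\bigr)f(x)}{\la x,v\ra^2}\,h_\k^2(x)\,d\s(x).
\]
In each remaining summand the substitution $x\mapsto\s_v x$ (which preserves $h_\k^2\,d\s$ and $\la x,v\ra^2$) gives $\int\frac{(f-\s_v f)f}{\la x,v\ra^2}h_\k^2\,d\s=-\int\frac{(f-\s_v f)(f\circ\s_v)}{\la x,v\ra^2}h_\k^2\,d\s$, so twice the summand equals $\int\frac{(f-\s_v f)^2}{\la x,v\ra^2}h_\k^2\,d\s=\|E_vf\|_{\k,2}^2$. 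Therefore $\la-\Delta_{\k,0}f,f\ra_\k=\int_{\sph}|\nabla_0 f|^2 h_\k^2\,d\s+\sum_{v\in\CR_+}\k_v\|E_vf\|_{\k,2}^2$, and \eqref{1-2-2} follows from the pointwise identity $\sum_{1\le i<j\le d}|D_{i,j}f(x)|^2=|\nabla_0 f(x)|^2$ on $\sph$, which is a one-line computation from $\|x\|=1$ and $\la x,\nabla_0 f\ra=0$.

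The step needing the most care is the integration by parts: $h_\k^2$ vanishes on each mirror $\{\la x,v\ra=0\}$, where $\nabla_0 h_\k^2$ is singular, while near such a mirror $\frac{f-\s_v f}{\la x,v\ra^2}h_\k^2=O(|\la x,v\ra|^{2\k_v-1})$ and $\frac{\la\nabla_0 f,v\ra}{\la x,v\ra}h_\k^2=O(|\la x,v\ra|^{2\k_v-1})$. For $\k_v>0$ these exponents exceed $-1$, so all the integrals converge absolutely (using $f\in C^1$, whence $f-\s_v f=O(\la x,v\ra)$); to make the boundary-free integration by parts legitimate one first removes an $\ve$-tubular neighbourhood of $\bigcup_{v\in\CR_+}\{\la x,v\ra=0\}$, integrates by parts on the complement, and lets $\ve\to0$, checking that the boundary terms vanish (they carry a factor $h_\k^2$, hence are a positive power of $\ve$ times the measure of the tube, hence $o(1)$). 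Granting this, the algebraic cancellations above are immediate and give \eqref{1-2-2}.
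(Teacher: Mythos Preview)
The paper does not prove this theorem; it merely quotes it from \cite{DF} and uses it as a black box in the proof of Theorem~\ref{thm-9-1}. There is therefore no ``paper's own proof'' to compare against.

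That said, your argument is correct and is essentially the natural one. The pointwise spherical formula for $\Delta_{\k,0}$ you record is right (it follows exactly as you say from Dunkl's expression for $\Delta_\k$ on $\RR^d$ applied to the degree-$0$ homogeneous extension), the integration by parts against $h_\k^2\,d\s$ produces precisely the cross term $2\sum_v\k_v\int\frac{\la\nabla_0 f,v\ra}{\la x,v\ra}f\,h_\k^2\,d\s$ needed to cancel the first-order piece, the symmetrisation $x\mapsto\s_v x$ converts $2\int\frac{(f-\s_v f)f}{\la x,v\ra^2}h_\k^2\,d\s$ into $\|E_vf\|_{\k,2}^2$, and the identity $|\nabla_0 f|^2=\sum_{i<j}|D_{i,j}f|^2$ on $\sph$ is standard. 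Your remark on the boundary terms is also to the point: the exponents $2\k_v-1>-1$ guarantee absolute integrability and that the contributions from the $\ve$-tubes around the mirrors vanish as $\ve\to0$.

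One small quibble: in your sanity check you say the linear functions $x\mapsto\la x,a\ra$ are $\Delta_\k$-harmonic; strictly speaking it is their degree-$1$ homogeneous extension $z\mapsto\la z,a\ra$ that is $\Delta_\k$-harmonic, not the degree-$0$ extension $F(z)=\la z,a\ra/\|z\|$ used in the definition of $\Delta_{\k,0}$. The eigenvalue conclusion $-\Delta_{\k,0}\la x,a\ra=(2\l_\k+1)\la x,a\ra$ is nevertheless correct, since $\la x,a\ra\in\HH_1^d(h_\k^2)$.
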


  Particularly, in the unweighted setting, namely when $\k=0$, this theorem will go back to the classical result (see for instance \cite[Section 1.8]{DaXuBook}) that for $f\in C^1(\sph)$,
\begin{equation}\label{UPP1}
  \| (-\Delta_{0})^{1/2}f\|_{2}^2=\|\nabla_0 f\|_2^2 =\sum_{1\leq i<j\leq d}\| D_{i,j} f\|_{2}^2.
  \end{equation}
  where $\|g\|_2^2=\int_{\sph}|g(x)|^2d\s(x)$, $g\in L^2(\sph)$.

\section{The proof of Corollary \ref{cor-9-2}}
For the moment, we take Theorem \ref{thm-9-1} for granted and proceed with the proof of Corollary \ref{cor-9-2}.
\begin{proof} By \eqref{9-1:uncertainty},    it suffices to show
\begin{equation}\label{9-4:uncertainty}
\|\sqrt{-\Delta_{\k,0}} f \|_{\k,2} \leq  \|\nabla_{\k,0} f\|_{\k,2}.
\end{equation}
Indeed, noticing (3.15), (3.13) of \cite{X_U}, we have that
 \begin{equation}\label{9-5}
\|\sqrt{-\Delta_{\k,0}} f\|_{\k,2}^2=\|\nabla_{h,0} f \|_{\k,2}^2   -\f{2\l_\k}{\og_d^\k} \int_{\sph} ( \xi\cdot \nabla_{h,0} f(\xi)) f(\xi) h_\k^2(\xi)\, d\s(\xi),
%\notag
\end{equation}
where $\og_d^\k =\int_{\sph} h_\k^2(x)\, d\s(x)$.
 Here it should be pointed out that the last two terms in (3.15) of \cite{X_U} in fact can be cancelled out by realising that
\[(I-\s_v)^2=2(I-\s_v), \  \  \forall v\in \CR_+.\]

Furthermore,  by
(3.3) of \cite{X_U}, we obtain
\begin{align*}
\int_{\sph} ( \xi\cdot \nabla_{h,0} f(\xi)) f(\xi) h_\k^2(\xi)\, d\s(\xi)=\sum_{v\in \CR_+} \k_v \int_{\sph} ( f(\xi)-f(\s_v \xi))f(\xi)h_\k^2(\xi)\, d\s(\xi).
\end{align*}
However, by the Cauchy-Schwartz  inequality,
$$ \int_{\sph} f(x) f(\s_vx) h_\k^2(x)\, d\s(x) \leq \|f\|_{\k,2}^2,\   \  \forall v\in \CR_+.$$
Thus,
\begin{equation*}\int_{\sph} \Bl(\xi\cdot \nabla_{h,0}) f(\xi)\Br)  f(\xi) h_\k^2(\xi) \, d\s(\xi)\ge 0.\end{equation*}
The desired inequality \eqref{9-4:uncertainty}  then follows by \eqref{9-5}.
\end{proof}

\section{The Proof of \thmref{thm-9-1}}
Now we turn to the proof of Theorem \ref{thm-9-1}.
Recall that $\l_\k=\f{d-2}2+|\k|$ and $|\k|=\sum_{\a\in \CR_+}\k_\a$.

 Our proof crucially relies on the following lemma.
 \begin{lem} If $f\in C^1(\sph)$ and $y\in\sph$, then
 \begin{align}
   \Bl (\f{d-1}2+|\k|\Br)& \int_{\sph} \la x, y\ra
    |f(x)|^2 h_{\k}^2 (x)\, d\s(x)
    =\sum_{\a\in \CR_+} \k_{\a} \lb y,\a\rb \int_{\sph}  \f{|f(x)|^2\ww}{\xa}\,d\s(x)\notag\\
      &-\int_{\sph}
    \Bl[\sum_{i=1}^d \sum_{j=1}^d   x_j y_i D_{i,j} f(x) \Br] f(x) h_{\k}^2 (x) \, d\s(x), \label{9-4-0}\end{align}
    where $x_j=\la x, e_j\ra$ and $y_j=\la y, e_j\ra$.
\end{lem}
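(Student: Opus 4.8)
The plan is to derive \eqref{9-4-0} by an integration-by-parts argument on $\sph$, exploiting the homogeneity of the weight $h_\k^2$ together with the algebraic identities satisfied by the operators $D_{i,j}$. First I would fix $y\in\sph$ and consider the quantity $\sum_{i,j} x_j y_i D_{i,j} f(x)$; since $D_{i,j}=x_j\p_i-x_i\p_j$, a direct computation gives $\sum_{i,j} x_j y_i D_{i,j} f(x) = \sum_{i,j} x_j y_i (x_j\p_i - x_i\p_j) f(x) = \|x\|^2 \la y,\nabla f\ra - \la x,y\ra \la x,\nabla f\ra$, which on $\sph$ (where $\|x\|=1$) is the tangential directional derivative of $f$ in the direction of the projection of $y$ onto $T_x\sph$. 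So the second integral on the right of \eqref{9-4-0} is, up to sign, $\int_{\sph} \big(\la y,\nabla_0 f\ra\big) f\, h_\k^2\, d\s$, an expression one can integrate by parts.

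Next I would set up the integration-by-parts (Green's/divergence) formula on $\sph$ against the weight $h_\k^2$. Writing $\la y,\nabla_0 f\ra f = \tfrac12 \la y,\nabla_0 |f|^2\ra$ (valid for real $f$), the integral becomes $\tfrac12 \int_{\sph} \la y,\nabla_0(|f|^2)\ra h_\k^2\, d\s$, and integrating by parts moves the tangential gradient onto $y\cdot$(something) and onto $h_\k^2$. The boundary terms vanish since $\sph$ is closed. The derivative of $h_\k^2$ contributes a term involving $\nabla_0 h_\k^2 / h_\k^2$; because $h_\k^2(x) = \prod_{v\in\CR_+}|\la x,v\ra|^{2\k_v}$ is homogeneous, one has $\la x,\nabla h_\k^2\ra = 2|\k| h_\k^2$ and $\p_i \log h_\k^2 = \sum_{v\in\CR_+} 2\k_v \la v,e_i\ra/\la x,v\ra$, so the tangential part of $\nabla_0 \log h_\k^2$ in the direction $y$ produces exactly $\sum_{\a\in\CR_+} 2\k_\a \la y,\a\ra/\la x,\a\ra$ minus the radial correction $2|\k|\la x,y\ra$. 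The divergence of the tangential vector field $x\mapsto (\text{projection of }y)$ on $\sph$ contributes the factor $-(d-1)\la x,y\ra$ (the mean curvature / trace-of-second-fundamental-form term). Collecting these, the combination $\tfrac12[(d-1) + 2|\k|]\int \la x,y\ra |f|^2 h_\k^2\, d\s$ appears on one side and $\sum_\a \k_\a\la y,\a\ra \int |f|^2 h_\k^2/\la x,\a\ra\, d\s$ on the other, matching \eqref{9-4-0} after rearranging; note $\tfrac12[(d-1)+2|\k|] = \tfrac{d-1}2 + |\k|$, precisely the coefficient stated.

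An alternative, and probably cleaner, route avoids choosing a parametrization: I would instead extend $f$ to a neighbourhood of $\sph$ as a $0$-homogeneous function $F(z)=f(z/\|z\|)$, apply the ambient first-order identity $\sum_j \p_j\big(z_j \,g(z)\,h_\k^2(z)\big)$ with a suitable $0$-homogeneous $g$, and use that for $0$-homogeneous $G$ one has $\int_{\sph} (\CD_j G) h_\k^2\, d\s$ reducing to surface integrals with no boundary contribution — this is the mechanism behind Theorem \ref{thm-1-5} of \cite{DF} and equations (3.3), (3.15) of \cite{X_U}, which I am allowed to quote. Concretely, one writes $\la x,y\ra |f|^2 h_\k^2$ as a sum of $D_{i,j}$-derivatives plus difference-operator terms by applying the product rule for $D_{i,j}$ to $x\mapsto x_j y_i |f(x)|^2$, then integrates each $D_{i,j}(\cdot)h_\k^2$ term using the known skew-adjointness-type formula $\int_{\sph}(D_{i,j}u)\,v\,h_\k^2\,d\s = -\int_{\sph} u\,(D_{i,j}v)\,h_\k^2\,d\s + \text{(difference terms from }h_\k^2\text{)}$; the difference terms assemble into $\sum_\a\k_\a\la y,\a\ra\int |f|^2 h_\k^2/\la x,\a\ra\,d\s$.

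The main obstacle will be tracking the interaction between the $D_{i,j}$ integration by parts and the weight $h_\k^2$: unlike in the unweighted case, $D_{i,j}$ is not skew-adjoint with respect to $h_\k^2\,d\s$, and the correction terms are exactly the difference-quotient expressions $\la y,\a\ra/\la x,\a\ra$ weighted by $\k_\a$. Getting the constant $\tfrac{d-1}2+|\k|$ right requires carefully combining the $(d-1)/2$ coming from the sphere's geometry (the $\binom d2$ operators $D_{i,j}$ and the Euler-type identity $\sum_{i<j} D_{i,j}(x_j y_i - \cdots)$) with the $|\k|$ coming from the homogeneity degree of $h_\k^2$. I would double-check this constant by testing on $f\equiv 1$ in the unweighted case $\k=0$, where \eqref{9-4-0} should reduce to the trivial identity $\tfrac{d-1}2\int_{\sph}\la x,y\ra\,d\s(x) = -\int_{\sph}\sum_{i,j} x_j y_i D_{i,j}(1)\,d\s = 0$, consistent since $\int_{\sph}\la x,y\ra\,d\s(x)=0$ by symmetry.
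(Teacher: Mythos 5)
Your proposal is correct in substance, and its primary route, while resting on the same integration-by-parts mechanism as the paper, is organized differently. The paper never collapses the double sum: it applies the skew-adjointness of each $D_{i,j}$ with respect to the unweighted measure $d\s$ to $\int_{\sph}\bigl[x_jD_{i,j}f\bigr]f\,h_\k^2\,d\s$, computes $D_{i,j}\bigl(x_jh_\k^2\bigr)$ explicitly by the product rule, and then sums over $j$ and over $i$ with weights $y_i$ --- this is exactly the ``concrete'' variant you sketch in your third paragraph. Your main route instead first identifies $\sum_{i,j}x_jy_iD_{i,j}f$ with the tangential directional derivative $\la y-\la x,y\ra x,\nabla_0 f\ra$ and performs a single intrinsic integration by parts; the two ingredients you isolate, namely $\operatorname{div}_0\bigl(y-\la x,y\ra x\bigr)=\D_0\la x,y\ra=-(d-1)\la x,y\ra$ and Euler's identity $\la x,\nabla h_\k^2\ra=2|\k|\,h_\k^2$ together with $\nabla\log h_\k^2=\sum_{\a\in\CR_+}2\k_\a\,\a/\xa$, are precisely what produce the constant $\f{d-1}2+|\k|$ and the difference terms, so the provenance of each piece is more transparent than in the term-by-term computation. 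One point you left unresolved (``up to sign'') should be nailed down: carrying your computation through with the stated convention $D_{i,j}=x_j\p_i-x_i\p_j$ puts the double-sum integral into \eqref{9-4-0} with a plus sign rather than the printed minus (correspondingly, the paper's displayed value of $D_{1,j}(x_jh_{\k}^2)$ is the negative of what that convention gives); this is harmless downstream, since only the absolute value $J_1$ of that integral is used in the proof of Theorem \ref{thm-9-1}, but your write-up should fix the sign rather than leave it implicit. A secondary caveat, common to both arguments, is that differentiating $h_\k^2$ across the hyperplanes $\la x,v\ra=0$ needs the integrability of $h_\k^2/\xa\sim|\xa|^{2\k_\a-1}$, which holds whenever $\k_\a>0$ and is the tacit justification for the integration by parts.
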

\begin{proof}
By noticing that
for $f, g\in C^1(\sph)$ and $i\neq j$,
    \begin{equation*}\label{2-21:sec2}
\int_{\sph}f(x)D_{i,j}g(x)d\s(x)=-\int_{\sph} D_{i,j}f(x)g(x)d\s(x),
\end{equation*}
 we obtain that  for $2\leq j\leq d$,
\begin{align*}
    \int_{\sph} \Bl[ x_j D_{1,j} f(x)\Br] f(x) h_{\k}^2(x)\, d\s(x) &=-\int_{\sph} f(x) \Bl[D_{1,j} f(x)\Br] x_j h_{\k}^2(x)\, d\s(x) \\
    &-\int_{\sph} |f(x)|^2 \Bl[D_{1,j} \bl( x_j h_{\k}^2 (x)\br)\Br]\, d\s(x).
\end{align*}
A  straightforward calculation shows that
\begin{align*}
  &D_{1,j}\bl(x_jh_{\k}^2(x)\br)
  =\Bl(x_1+x_1\sum_{\a\in \CR_+}\f{2\k_{\a}x_j\a_j}{\xa}-x_j^2\sum_{\a\in \CR_+}\f{2\k_{\a}\a_1}{\xa}\Br)h_\k^2(x),
\end{align*}
where $\a_j=\la \a, e_j\ra$.
Thus,
\begin{align*}
   2 \int_{\sph} \Bl[ x_j D_{1,j} f(x)\Br] f(x) h_{\k}^2(x)\, d\s(x)
   =&\int_{\sph} |f(x)|^2  x_j^2\Bl(\sum_{\a\in \CR_+}\f{2\k_{\a}\a_1}{\xa} \Br)h_{\k}^2(x)\, d\s(x)\\
   &-\int_{\sph} |f(x)|^2\Bl[x_1+x_1\sum_{\a\in \CR_+}\f{2\k_{\a}x_j\a_j}{\xa}\Br] h_\k^2(x)\,d\s(x)
   \end{align*}
   Summing this last equation  over $j=2,\cdots, d$ yields
   \begin{align*}
    \int_{\sph}&
    \Bl[ \sum_{j=2}^d x_j
    D_{1,j} f(x)\Br] f(x) h_{\k}^2(x)\, d\s(x)
%    =&\int_{\sph} |f(x)|^2(1-x_1^2) \sum_{\a\in \CR_+}\f{\k_{\a}\a_1}
%    {\xa} h_{\k}^2(x)\, d\s(x)\\
%    &  -\sum_{j=2}^d\int_{\sph} x_1 |f(x)|^2\sum_{\a\in \CR_+}
%    \f{\k_{\a}x_j\a_j}{\xa} h_{\k}^2(x)\, d\s(x)  -\f{d-1}{2}\int_{\sph}x_1 |f(x)|^2\ww \,d\s(x)\\
    =\int_{\sph}|f(x)|^2\sum_{\a\in \CR_+}\f{\k_{\a}\a_1}{\xa}\ww\,d\s(x)\\
   & -\Bl(|\k|+\f{d-1}{2}\Br)\int_{\sph}x_1 |f(x)|^2 h_\k^2(x) \,d\s(x).
   \end{align*}

   In general,  for $1\leq i\leq d$,    recalling $D_{i,i}=0$, and using symmetry,
    we obtain
      \begin{align}
    \int_{\sph}& \Bl[ \sum_{j=1}^d
     x_j D_{i,j} f(x)\Br] f(x) h_{\k}^2(x)\, d\s(x) =\int_{\sph}|f(x)|^2\sum_{\a\in \CR_+}\f{\k_{\a}\a_i}{\xa}\ww\,d\s(x)\notag\\
     &-\Bl(|\k|+\f{d-1}{2}\Br)\int_{\sph}x_i |f(x)|^2\ww \,d\s(x)
    d\s(x).\label{9-3-1}
   \end{align}
   Multiplying both sides of \eqref{9-3-1}  by $y_i$ and
   summing the resulting equation  over $i=1,\cdots, d$  yield the desired identity \eqref{9-4-0}.\end{proof}

We are now in a position to prove Theorem \ref{thm-9-1} .\\

{\it Proof of Theorem \ref{thm-9-1}.}  \   \    Let
$\va\in (0,1)$  be  a small absolute constant  to  be  specified later.
 If $$\int_{\sph} \la x, y\ra |f(x)|^2  h_\k^2(x)\, d\s(x)\leq
 1-\va,$$  then
$$\int_{\sph} |f(x)|^2 (1-\la x, y\ra )h_\k^2(x)\, d\s(x)
 \ge \va,$$
 and   \eqref{9-1:uncertainty} holds trivially as
 $\|\sqrt{-\Delta_{\k,0}} f\|_{\k,2}\ge \|f\|_{\k,2}=1.$
  Thus, without loss of generality,   we  may assume that
\begin{equation}\label{9-4-1}\int_{\sph} \la x, y\ra  |f(x)|^2 h_\k^2(x)\, d\s(x)> 1-\va.\end{equation}

    We will use the identity \eqref{9-4-0}.
   Indeed, it will be shown that
    \begin{align}
       &J_1:= \Bl|\int_{\sph}
    \Bl[\sum_{i=1}^d \sum_{j=1}^d  y_i x_j D_{i,j} f(x) \Br] f(x) h_{\k}^2 (x) \, d\s(x)\Br|\notag\\
    \leq& C \|\nabla_0 f \|_{\k,2}
    \Bl(\int_{\sph} |f(x)|^2 (1-\la x, y\ra) h_{\k}^2(x)\, dx \Br)^{\f12}\label{9-5-0}
    \end{align}
   and that   for each $\a\in \CR_+$ with $\k_\a>0$,
   \begin{align}
  J_2(\a):&= \Bl| \lb y,\a\rb \int_{\sph}   \f{ |f(x)|^2 \ww}{\xa}\,d\s(x)\Br|\notag\\
  &\leq \f 1{1-\va} +\f{C}{\va}  \|E_\a f\|_{\k,2} \Bl(
    \int_{\sph} |f(x)|^2 (1-\la x, y\ra) h_{\k}^2(x) \,
    d\s(x) \Br)^{\f12}.\label{9-6-1}
   \end{align}
   Once \eqref{9-5-0} and \eqref{9-6-1} are proven, then using \eqref{9-4-0}, \eqref{9-4-1}  and \eqref{1-2-2}, we obtain
    \begin{align*}
(1-\va) \Bl(|\k|+\f {d-1} 2\Br)&\leq \f {C|\k|} \va  \|\sqrt{-\Delta_{\k,0}}
f\|_{\k,2} \Bl(
    \int_{\sph}
    |f(x)|^2 (1-\la x, y\ra) h_{\k}^2(x) \,
     d\s(x) \Br)^{\f12}\\
     & + \f {|\k|}{1-\va} .
\end{align*}
Thus,  choosing $\va\in (0,1)$ small
enough so that
$$(1-\va) \Bl(|\k|+\f {d-1} 2\Br)-\f 1{1-\va} |\k|\ge C_{d,\k}>0,$$
   we deduce the desired inequality  \eqref{9-1:uncertainty}.

    It remains to show \eqref{9-5-0} and \eqref{9-6-1}.
    For the proof of \eqref{9-5-0}, we first note that for $x\in\sph$,
    \begin{align*}
        \sum_{i=1}^d \sum_{j=1}^d x_i x_j D_{i,j}=\sum_{i=1}^d \sum_{j=1}^d  (x_i^2 x_j \p_j-x_i x_j^2 \p_i)=0.
    \end{align*}
    Thus,
    \begin{align}
        J_1
    &=\Bl|\int_{\sph}
    \Bl[\sum_{i=1}^d \sum_{j=1}^d  (y_i-x_i) x_j D_{i,j} f(x) \Br] f(x) h_{\k}^2 (x) \, d\s(x)
    \Br|\notag\\
    &\leq \Bl( \int_{\sph}
    \f{|\sum_{i,j=1}^d  (y_i-x_i) x_j D_{i,j}
    f(x)|^2}{1-\la x, y\ra} h_{\k}^2 (x) \, d\s(x) \Br)^{\f12}\times \notag\\
    &\   \   \  \times \Bl(\int_{\sph} |f(x)|^2 (1-\la x, y\ra) h_{\k}^2(x)\,
    d\s(x) \Br)^{\f12}.\notag
    \end{align}
    But,  by the Cauchy-Schwartz inequality,
    \begin{align*}
       & \Bl|\sum_{i=1}^d \sum_{j=1}^d (y_i-x_i) x_j D_{i,j} f(x)\Br|^2 \leq \Bl[ \sum_{i,j
       =1}^d |x_j|^2 (y_i-x_i)^2\Br] \Bl[\sum_{i, j=1}^d |D_{i,j}f(x)|^2\Br]\\
       &=4(1-\la x, y\ra)  \Bl[\sum_{1\leq i<j\leq d}|D_{i,j}f(x)|^2\Br]% =4(1-\la x, y\ra) \|\nabla_0 f(x)\|^2.
    \end{align*}
    It follows that
  \begin{align*}
   J_1 \leq& 2 \Bl(\sum_{1\leq i<j\leq d} \int_{\sph} |D_{i,j}f(x)|^2 h_{\k}^2 (x)\,
     d\s(x)\Br)^{\f12}\Bl(\int_{\sph} |f(x)|^2
      (1-\la x, y\ra) h_{\k}^2(x)\, d\s(x) \Br)^{\f12},\end{align*}
      which  implies  \eqref{9-5-0} by \eqref{UPP1}.

    Finally, we prove \eqref{9-6-1}.
    Splitting the integral  $\int_{\sph}\cdots$ into two parts, we get \begin{equation}\label{9-7-0}
       J_2(\a)\leq  J_{2,1}(\a)+J_{2,2}(\a),
       \end{equation}
       where
        \begin{align*}
        J_{2,1}(\a):=&\Bl|\lb y,\a\rb\int_{|\xa|>(1-\varepsilon)|\lb y,\a\rb|}\f{|f(x)|^2\ww}{\xa}\,d\s(x)\Br|,\\
                 J_{2,2}(\a):=&\Bl|\lb y,\a\rb
                 \int_{|\xa|\leqslant(1-\varepsilon)|\lb y,\a\rb|}\f{|f(x)|^2\ww}{\xa}\,d\s(x)\Br|.\end{align*}
A straightforward calculation shows that
\begin{align}
    J_{2,1}(\a)
    &\leq\f 1{1-\va}
    \int_{\sph}
    |f(x)|^2 h_{\k}^2 (x) \, d\s(x)=\f1{1-\va} .\label{9-8-0}
     \end{align}

    To estimate the term $J_{2,2}(\a)$,  we first note that for any $t\in (0,1)$ and $\a\in \CR_+$,
    $$\int_ {|\xa|\leqslant
    t}\f{|f(x)|^2}{\la x, \a\ra}  h_\k^2(x)\, d\s(x)=
    \int_{|\xa|\leqslant
    t}\Bl(E_{\a}f(x)\Br) f(x)\ww\,d\s(x).$$
    Thus,
    \begin{align}
  J_{2,2}(\a)&=
     \Bl||\la y, \a\ra \int_{|\xa|\leqslant
    (1-\varepsilon)|\lb y,\a\rb|}\Bl( E_{\a}f(x)\Br)f(x)    \ww\,d\s(x)\Br|\notag\\
    &\leq \f 1{\va}  \Bl| \int_{\sph}\|x-y\|\Bl( E_{\a}f(x)\Br)f(x)    \ww\,d\s(x)\Br|\notag\\
    &\leq \f{\sqrt{2}}{\va}  \|E_\a f\|_{\k,2} \Bl(
    \int_{\sph} |f(x)|^2 (1-\la x, y\ra) h_{\k}^2(x) \,
    d\s(x) \Br)^{\f12}, \label{9-9-0}\end{align}
    where the second step uses the fact that    if
     $|\xa|\leqslant(1-\varepsilon)|\lb y,\a\rb|$,  then
    \[\varepsilon|\lb y,\a\rb|\leqslant |\lb y,\a\rb|-|\xa|
    \leqslant  \|x-y\|.\]
    Now a combination of  \eqref{9-7-0}, \eqref{9-8-0}  and \eqref{9-9-0} yields the estimate \eqref{9-6-1}.

    This completes the proof of Theorem \ref{thm-9-1}.\hb
\section{Uncertainty Principle on The unit Ball and The simplex}

In this section, we will drive uncertainty principles for weighted orthogonal polynomial expansions  on the unit ball and the simplex from results established in  the last section.

 Our argument is based on a close relation among analysis on the unit sphere, the unit ball and simplex (see, e.g. \cite{DX}, \cite[Sections 9,10]{DF}). More precisely, given two changes of  variables $y=\phi(x)$, $z=\psi(x)$ with
 \begin{eqnarray*}
 &\phi: \  \BB^d\to \SS^d,\   \
 x\in \BB^d \mapsto (x,\sqrt{1-\|x\|^2}) \in \SS^{d},\\
 &\psi: \  \BB^d \to \TT^d, \  \
 x\in \BB^d \mapsto (x_1^2,x_2^2,\cdots,x_d^d)\in \TT^d,
\end{eqnarray*}
 we have that
\begin{align}\label{BSintegral}
&\int_{\SS^d} f(y) d\s(y)\\
 = &\int_{\BB^d} \left[
f(x,\sqrt{1-\|x\|^2}\,)+
     f(x,-\sqrt{1-\|x\|^2}\,) \right] \, \f{dx}{\sqrt{1-\|x\|^2}}\notag
\end{align}
and
\begin{equation} \label{T-B}
\int_{\BB^d} g\bl( \psi(x)\br) dx = \int_{\TT^d} g(z)\,\f{ dz}{|z_1\cdots z_d|}.
            \end{equation}

 Recall that $G$ is a finite  reflection group on $\RR^d$ with a root system $\CR\subset \RR^d$;  $\k: \CR\to [0,\infty)$ is a  nonnegative multiplicity  function on $\CR$; the weight functions $W_{\k,\mu}^B$ on  $\BB^d$  and $W_{\k,\mu}^T$ on $\TT^d$ are  given in \eqref{weightB} and \eqref{weightT1}, \eqref{weightT2},  respectively.

Let $\Delta_{\k, \mu}^B$  and $\Delta_{\k, \mu}^T$  be the analogues of the Dunkl-Laplace-Beltrami operator $\Delta_{\k,0}$ on $\BB^d$ and  $\TT^d$, respectively. They are second order differential-difference operators and their precise definitions can be found in \cite[Sections 8.1,8.2]{DX}. Here we just emphasize the relations among the three operators. First,  for a function $f$ on $\BB^d$, the identity
\begin{equation}\label{9-1-5}
       (-\D_{\k,\mu}^B)^{\a}f(x)=
       (-\D_{\tilde{\k},0})^{\a}\wt{f}(\phi(x)) , \quad x\in\ball,\   \  \a\in\RR,
    \end{equation}
    holds in a distributional  sense,
where the weight associated to $\D_{\tilde{\k},0}$ is
\[h_{\tilde{\k}}(x)=|x_{d+1}|^\mu\prod_{v\in \RR_+} |\la x,v\ra|^{\k_v},\quad  x\in \sphd\]
 and $\tilde{f}(x,x_{d+1})=f(x)$.
  Second, for a function $f$ on $C^2(\TT^d)$,
\begin{equation}\label{10-4}
   \bl((-\D_{\k, \mu}^T)^\a f\br) \circ \psi(x)=4^{-\a} (-\D_{\k,\mu}^B)^\a (f\circ \psi)(x),\quad x\in \ball,\ \  \a\in\RR.
 \end{equation}

 Then the following  results on the unit ball and simplex, which are similar to that of  \thmref{thm-9-1} on the sphere, are immediate consequences of \eqref{BSintegral} ,\eqref{9-1-5} and \eqref{T-B},\eqref{10-4}:
\begin{thm}\label{ball-up}
Let $f\in C^1(\ball)$ be such that $\int_{\ball} f(x) W_{\k,\mu}^B(x)\, dx=0$ and $\int_{\ball} |f(x)|^2 W^B_{\k,\mu}(x)dx=1$.
Then
\begin{align}
\Bl[&\min_{y\in\ball} \int_{\ball}(1-\la x, y\ra ) |f(x)|^2
W^B_{\k,\mu}(x)\, d(x) \Br]\times\notag\\
&\times  \Bl[ \int_{\ball} | \sqrt{-\Delta^B_{\k,\mu}}
f(x)|^2 W^B_{\k,\mu}(x)\, d(x)\Br]\ge C_{d,\k,\mu}>0. \label{9-2:uncertainty}
\end{align}
\end{thm}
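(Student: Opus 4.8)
The plan is to derive Theorem \ref{ball-up} from Theorem \ref{thm-9-1} by the standard sphere-to-ball transference, using the change of variables $\phi:\BB^d\to\SS^d$, $x\mapsto(x,\sqrt{1-\|x\|^2})$, together with the operator identity \eqref{9-1-5} and the integral formula \eqref{BSintegral}. First I would observe that a function $f$ on $\BB^d$ lifts to the \emph{even} (in the $x_{d+1}$ variable) function $\wt f$ on $\SS^d$ defined by $\wt f(x,x_{d+1})=f(x)$, and that under \eqref{BSintegral} one has, for the weight $h_{\wt\k}(y)=|y_{d+1}|^\mu\prod_{v\in\CR_+}|\la y,v\ra|^{\k_v}$ on $\SS^d$,
\[
\int_{\SS^d}|\wt f(y)|^2 h_{\wt\k}^2(y)\,d\s(y)=2\int_{\BB^d}|f(x)|^2 W^B_{\k,\mu}(x)\,dx,
\]
since $h_{\wt\k}^2(\phi(x))=(1-\|x\|^2)^{\mu}\prod_{v}|\la x,v\ra|^{2\k_v}$ and the extra factor $(1-\|x\|^2)^{-1/2}$ from \eqref{BSintegral} combines with $(1-\|x\|^2)^{\mu}$ to give exactly $W^B_{\k,\mu}(x)=\bigl(\prod_v|\la x,v\ra|^{2\k_v}\bigr)(1-\|x\|^2)^{\mu-1/2}$; the factor $2$ comes from the two sheets $\pm\sqrt{1-\|x\|^2}$ on which $\wt f$ takes the same value. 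Hence the normalizations $\int f W^B_{\k,\mu}=0$, $\int|f|^2W^B_{\k,\mu}=1$ translate into $\int_{\SS^d}\wt f\,h_{\wt\k}^2\,d\s=0$ and $\int_{\SS^d}|\wt f|^2 h_{\wt\k}^2\,d\s=2$, so $\wt f/\sqrt2$ satisfies the hypotheses of Theorem \ref{thm-9-1} on $\SS^d$ with multiplicity function $\wt\k$.

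Next I would handle the two factors separately. For the gradient-type factor, \eqref{9-1-5} with $\a=1/2$ gives $(-\D^B_{\k,\mu})^{1/2}f(x)=(-\D_{\wt\k,0})^{1/2}\wt f(\phi(x))$ pointwise (in the distributional sense made precise there), so that
\[
\int_{\SS^d}\bigl|(-\D_{\wt\k,0})^{1/2}\wt f(y)\bigr|^2 h_{\wt\k}^2(y)\,d\s(y)=2\int_{\BB^d}\bigl|(-\D^B_{\k,\mu})^{1/2}f(x)\bigr|^2 W^B_{\k,\mu}(x)\,dx,
\]
by the same transference of the integral. For the localization factor, I would note that for $y=\phi(w)$ with $w\in\BB^d$ and a point $x=\phi(u)$, $\la\phi(u),\phi(w)\ra=\la u,w\ra+\sqrt{1-\|u\|^2}\sqrt{1-\|w\|^2}\le 1$, hence $1-\la\phi(u),\phi(w)\ra\le 1-\la u,w\ra$; combined with evenness of $\wt f$ this yields
\[
\min_{z\in\SS^d}\int_{\SS^d}(1-\la y,z\ra)|\wt f(y)|^2 h_{\wt\k}^2(y)\,d\s(y)\le 2\,\min_{w\in\BB^d}\int_{\BB^d}(1-\la x,w\ra)|f(x)|^2 W^B_{\k,\mu}(x)\,dx,
\]
where on the right we restrict the minimum over $z\in\SS^d$ to $z=\phi(w)$, $w\in\BB^d$ (this only enlarges, hence upper bounds, the left-hand quantity). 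Plugging both displays into the conclusion \eqref{9-1:uncertainty} of Theorem \ref{thm-9-1} applied to $\wt f/\sqrt2$, the factors of $2$ cancel in the right combination and we obtain \eqref{9-2:uncertainty} with $C_{d,\k,\mu}=C_{d+1,\wt\k}$.

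The step I expect to require the most care is the comparison of the ``min over $y$'' terms: one must check that restricting the sphere minimum to the image $\phi(\BB^d)$ (the upper hemisphere) and exploiting the evenness of $\wt f$ really produces an inequality in the direction needed, and that the pointwise bound $1-\la\phi(u),\phi(w)\ra\le1-\la u,w\ra$ is used with the correct sign; since we need a \emph{lower} bound on the product, an \emph{upper} bound on the sphere localization term by the ball localization term is exactly what is required, which is what the hemisphere restriction gives. A secondary technical point is justifying \eqref{9-1-5} at the level of $L^2$-norms for merely $C^1$ functions $f$, but this is precisely the content of the cited results in \cite[Sections 8--10]{DX}, \cite[Sections 9,10]{DF}, which we are entitled to invoke. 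Finally, the analogous results on the simplex $\TT^d$ for $W^T_\k(\,\cdot\,;\ZZ_2^d)$ and $W^T_{\k,\mu}(\,\cdot\,;H_d)$ follow in the same way by composing with $\psi:\BB^d\to\TT^d$ and using \eqref{T-B} and \eqref{10-4} (the factor $4^{-\a}$ in \eqref{10-4} only affects the constant), together with the elementary inequality $1-\la\psi(u),\psi(w)\ra\le C\,(1-\la u,w\ra)$ needed to transfer the localization term; I would state this as a corollary with the constant absorbed into $C_{d,\k,\mu}$.
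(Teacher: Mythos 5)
Your proposal is correct and follows essentially the route the paper intends: the paper offers no detailed proof of Theorem~\ref{ball-up}, simply asserting it is an immediate consequence of \eqref{BSintegral} and \eqref{9-1-5}, and your write-up supplies exactly the missing transference computation (even lift $\wt f$, factor of $2$ in all three integrals, restriction of the spherical minimum to $z=\phi(w)$). One small repair to your justification of the localization term: the pointwise bound $1-\la \phi(u),\phi(w)\ra\le 1-\la u,w\ra$ only covers the upper sheet, while on the lower sheet $1-\la (u,-\sqrt{1-\|u\|^2}),\phi(w)\ra\ge 1-\la u,w\ra$; the correct observation is that the two sheets contribute $\bl(1-\la u,w\ra\br)\mp\sqrt{(1-\|u\|^2)(1-\|w\|^2)}$ whose sum is exactly $2(1-\la u,w\ra)$, so your displayed inequality in fact holds with equality for $z=\phi(w)$ and the argument goes through unchanged.
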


\begin{thm}\label{simplex-up}
  Let $f\in C^1(\TT^d)$ be such that $\int_{\TT^d} f(x) W_{\k,\mu}^T(x)\, dx=0$ and $\int_{\TT^d} |f(x)|^2 W^T_{\k,\mu}(x)dx=1$.
Then
\begin{align}
\Bl[&\min_{y\in\TT^d} \int_{\TT^d}(1-\la \psi^{-1}(x), \psi^{-1}(y)\ra ) |f(x)|^2
W^T_{\k,\mu}(x)\, d(x) \Br]\times\notag\\
&\times  \Bl[ \int_{\TT^d} | \sqrt{-\Delta^T_{\k,\mu}}
f(x)|^2 W^T_{\k,\mu}(x)\, d(x)\Br]\ge C_{d,\k,\mu}>0,\label{9-3:uncertainty}
\end{align}
where we recall that $\psi^{-1}(x)=(\sqrt{x_1},\sqrt{x_2}, \cdots, \sqrt{x_d})$.
\end{thm}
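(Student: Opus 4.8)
The plan is to deduce Theorems~\ref{ball-up} and \ref{simplex-up} from \thmref{thm-9-1} by pulling everything back to the sphere via the maps $\phi$ and $\psi$. I will treat the ball case first and then the simplex case will follow from it through $\psi$.

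\textbf{The ball.} Given $f\in C^1(\ball)$ with the two normalization conditions against $W_{\k,\mu}^B$, I would form $\wt f = f\circ\phi^{-1}$ on the upper hemisphere of $\SS^d$ and extend it to all of $\SS^d$ by even reflection in the last coordinate, i.e. $\wt f(x,x_{d+1}):=f(x)$. The key point is that the pushed-forward weight $h_{\tilde\k}(x)=|x_{d+1}|^\mu\prod_{v\in\CR_+}|\la x,v\ra|^{\k_v}$ on $\SS^d$ is invariant under the enlarged reflection group $G\times\ZZ_2$ (the extra $\ZZ_2$ being the reflection $x_{d+1}\mapsto -x_{d+1}$), so \thmref{thm-9-1} applies to $\wt f$ on $\SS^d$ with this weight. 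The integral identity \eqref{BSintegral} together with the evenness of $\wt f$ gives, for any sufficiently nice $g$ on $\BB^d$,
\begin{equation*}
\int_{\SS^d}\wt g(y)\,h_{\tilde\k}^2(y)\,d\s(y)=2\int_{\ball}g(x)\,W_{\k,\mu}^B(x)\,dx,
\end{equation*}
after absorbing the factor $(1-\|x\|^2)^{\mu-1/2}$ coming from $|x_{d+1}|^{2\mu}/\sqrt{1-\|x\|^2}$. Hence the two hypotheses on $f$ translate to $\int_{\SS^d}\wt f\,h_{\tilde\k}^2\,d\s=0$ and $\int_{\SS^d}|\wt f|^2 h_{\tilde\k}^2\,d\s=2$; a harmless rescaling fixes the second. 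The gradient-type factor transforms by \eqref{9-1-5} with $\a=1/2$: $\|\sqrt{-\Delta_{\k,\mu}^B}f\|_{W^B}^2$ equals (up to the constant $2$) $\|\sqrt{-\Delta_{\tilde\k,0}}\wt f\|_{\tilde\k,2}^2$. For the localization factor, I would observe that for $y=\phi(a)$, $a\in\ball$, one has $\la (x,x_{d+1}),(a,\sqrt{1-\|a\|^2})\ra \le \la x,a\ra$ on the relevant domain, or more simply that $\min$ over $y\in\SS^d$ of $\int(1-\la\cdot,y\ra)$ is no larger than the $\min$ over the image $\phi(\ball)$, which by the even symmetry of $\wt f$ and the weight can be taken in the $x_{d+1}=0$ equator —wait, that is not quite it; rather, for $y\in\SS^d$ split into its first-$d$ block $y'$ and last coordinate, and use that $\la (x,x_{d+1}),y\ra \le \la x, y'\ra + |x_{d+1}|\,|y_{d+1}|$ combined with evenness to bound the spherical localization quantity by (a constant times) the ball quantity $\int_{\ball}(1-\la x,y'/\|y'\|\ra)|f|^2 W^B$. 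Putting the three transformed quantities into \eqref{9-1:uncertainty} and taking infima over $y$ yields \eqref{9-2:uncertainty}.

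\textbf{The simplex.} Here I would compose once more with $\psi:\ball\to\TT^d$. Given $f\in C^1(\TT^d)$ normalized against $W_{\k,\mu}^T$, set $g:=f\circ\psi$ on $\ball$; since $\psi(x)=(x_1^2,\dots,x_d^2)$ depends only on $|x_i|$, the function $g$ is invariant under all the sign changes $x_i\mapsto -x_i$, so it is in particular even in each coordinate and \eqref{T-B} gives $\int_{\ball}G(\psi(x))\,W_{\k,\mu}^B(x)\,dx$ proportional to $\int_{\TT^d}G(z)\,W_{\k,\mu}^T(z)\,dz$ once one checks that $W_{\k,\mu}^B\circ$(the substitution) times the Jacobian $|z_1\cdots z_d|^{-1}$ reproduces $W_{\k,\mu}^T$; this is exactly the bookkeeping recorded in \cite[Sections 9,10]{DF}, both for the $\ZZ_2^d$ weight \eqref{weightT1} (with $G=\ZZ_2^d$ on $\ball$) and for the $H_d$ weight \eqref{weightT2}. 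Apply Theorem~\ref{ball-up} to $g$: the normalization conditions carry over, the operator factor carries over by \eqref{10-4} with $\a=1/2$ up to the harmless constant $4^{-1/2}$, and the localization factor $1-\la x,a\ra$ on $\ball$ becomes $1-\la\psi^{-1}(z),\psi^{-1}(w)\ra$ on $\TT^d$ by the very definition $\psi^{-1}(z)=(\sqrt{z_1},\dots,\sqrt{z_d})$ — which is precisely why that expression appears in \eqref{9-3:uncertainty}. Minimizing over $y\in\TT^d$ and matching constants gives \eqref{9-3:uncertainty}.

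\textbf{Main obstacle.} The routine part is the change-of-variables bookkeeping for the weights and Jacobians; the genuinely delicate point is controlling the \emph{localization} factor $\int(1-\la x,y\ra)|f|^2$ under the maps. Transferring an inequality that holds with $\min$ over all $y\in\SS^d$ down to one with $\min$ over $y\in\ball$ is not automatic, because $\phi$ is not surjective and the extra coordinate $x_{d+1}$ can only help or hurt depending on the sign; the even-extension trick is what makes it work, but one must verify carefully that $1-\la(x,x_{d+1}),y\ra$ integrated against the even function $|\wt f|^2 h_{\tilde\k}^2$ is comparable to $1-\la x,y'\ra$ integrated against $|f|^2 W^B$, uniformly in $y\in\SS^d$. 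A parallel subtlety occurs for $\psi$, where $1-\la\psi^{-1}(z),\psi^{-1}(w)\ra$ must be shown comparable to $1-\la x,a\ra$ with $x=\psi^{-1}(z)$ only after choosing signs consistently; the $\ZZ_2^d$-invariance of $g$ is exactly what permits this. I expect the write-up to spend most of its length on these comparisons, with the rest being direct substitution into \eqref{9-1:uncertainty} and \eqref{9-2:uncertainty}.
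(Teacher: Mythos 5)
Your ball step is sound, and in fact simpler than you fear: writing $y=(y',y_{d+1})\in\SS^d$, the evenness of $|\wt f|^2h_{\tilde\k}^2$ in $x_{d+1}$ makes $\int_{\SS^d}x_{d+1}|\wt f(x)|^2h_{\tilde\k}^2(x)\,d\s(x)$ vanish, so $\int_{\SS^d}(1-\la x,y\ra)|\wt f|^2h_{\tilde\k}^2\,d\s=2\int_{\ball}(1-\la x,y'\ra)|f|^2W_{\k,\mu}^B\,dx$ holds \emph{exactly}, and the two minima coincide up to the factor $2$ because $y'$ ranges over all of $\ball$ as $y$ ranges over $\SS^d$. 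No one-sided comparisons are needed, and the inequality $\la(x,x_{d+1}),y\ra\le\la x,y'\ra+|x_{d+1}|\,|y_{d+1}|$ you propose points in the wrong direction for your purpose anyway. This part matches the transference the paper (very tersely) intends via \eqref{BSintegral} and \eqref{9-1-5}.

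The simplex step, however, contains a genuine gap, exactly at the point you flag as a ``parallel subtlety''. Under $z=\psi(x)$ one has $\psi^{-1}(z)=(|x_1|,\dots,|x_d|)$, so the simplex localization factor pulls back to $1-\la |x|,a\ra$ with $|x|:=(|x_1|,\dots,|x_d|)$ and $a=\psi^{-1}(w)$, \emph{not} to $1-\la x,a\ra$. These two quantities are not comparable after integration against $|g|^2W_{\k,\mu}^B$ with $g=f\circ\psi$: since $|g|^2W_{\k,\mu}^B$ is even in each coordinate, $\int_{\ball}\la x,a\ra|g(x)|^2W_{\k,\mu}^B(x)\,dx=0$ for every $a$, so the ball localization factor of $g$ is identically $\|g\|^2$ and Theorem~\ref{ball-up} applied to $g$ yields only the vacuous bound $\|\sqrt{-\Delta_{\k,\mu}^B}\,g\|^2\ge C$. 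By contrast, $\int_{\ball}(1-\la|x|,a\ra)|g|^2W_{\k,\mu}^B\,dx$ can be made arbitrarily small (concentrate $f$ near an interior point $w_0$ of $\TT^d$ and take $a=\psi^{-1}(w_0)$). No choice of signs or appeal to $\ZZ_2^d$-invariance can repair this, because $1-\la|x|,a\ra=\min_{\epsilon\in\ZZ_2^d}(1-\la x,\epsilon a\ra)$ is a pointwise minimum over the group orbit, and the integral of a pointwise minimum is not controlled by the minimum over $\epsilon$ of the integrals. Hence Theorem~\ref{simplex-up} is strictly stronger than anything obtainable by feeding $f\circ\psi$ into Theorem~\ref{ball-up}; it requires its own argument, e.g.\ an analogue of the identity \eqref{9-4-0} with $\la x,y\ra$ replaced by $\la|x|,|y|\ra$, or a rerun of the Section~4 proof for that modified localization factor. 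In fairness, the paper itself offers only the one-line assertion that the result is ``immediate'' from \eqref{T-B} and \eqref{10-4}, so there is no written proof to match; but the gap in the transference is real and your write-up would inherit it.
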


We remark that    Theorem \ref{ball-up} and Theorem \ref{simplex-up} above  improve the corresponding results obtained recently in \cite{X_U}. In fact,
Theorem 5.1, Theorem 5.2, Theorem 6.1, Theorem  6.2  and Corollary 5.3 of the paper \cite{X_U} follow directly from the above two theorems. We also note that equivalently, we can take  the  minimums  in the above two theorems over the sphere  $\sphd$ rather than
 the ball $\ball$.

\end{document}